\def\beq{\begin{equation}}
\def\eeq{\end{equation}}\usepackage{url}
\newtheorem{lem}{Lemma}
\newtheorem{thm}[lem]{Theorem}
\newtheorem{prp}[lem]{Proposition}
\newtheorem{cor}[lem]{Corollary}
\theoremstyle{definition}
\newtheorem{rem}{Remark}
\def\ra{\rightarrow}
\def\beqa{\begin{eqnarray}}
\def\eeqa{\end{eqnarray}}
\def\beqa{\begin{eqnarray}}
\def\eeqa{\end{eqnarray}}
\def\Out{\mathrm{Out}}
\def\vol{\mathrm{vol}}
\def\Isom{\mathrm{Isom}}
\begin{document}
\title[Periodic flats and group actions on locally symmetric spaces]{Periodic flats and group 
actions on locally symmetric spaces}

\author{Grigori Avramidi}
\address{Dept. of Mathematics\\
5734 S. University Avenue\\
Chicago, Illinois 60637}
\email[G.~Avramidi]{gavramid@math.uchicago.edu}




\begin{abstract}
We use maximal periodic flats to show that on a finite volume
irreducible locally symmetric manifold of dimension $\geq 3$, no metric 
has more symmetry than the locally symmetric metric.
We also show that if a finite volume metric is not locally symmetric, 
then its lift to the universal cover has discrete isometry group. 
\end{abstract}
\maketitle
\section{Introduction}
The goal of this paper is to give some new results of the flavor that 
`the locally symmetric metric on a locally symmetric manifold is the most symmetric
metric on that manifold'. Earlier work, reviewed below, establishes such results for 
compact manifolds. Our goal here is to dispense with compactness, where new obstacles and
phenomena arise. 
Let $(M,g)$ be a complete Riemannian manifold homeomorphic to a finite volume, 
nonpositively curved locally symmetric space with no local torus factors.
In this paper we prove that the metric $g$ has no homotopically trivial isometries. 
More precisely, the isometry group $\Isom(M,g)$ acts on free homotopy classes of loops and this
gives an action homomorphism 
\begin{equation}
\rho:\Isom(M,g)\ra\Out(\pi_1M).
\end{equation}

\begin{thm}
\label{maintheorem}
The homomorphism $\rho$ is injective.
\end{thm}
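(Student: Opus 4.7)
The plan is to reduce to showing that any $\Gamma$-equivariant lift of a homotopically trivial isometry must fix pointwise every ``periodic flat'' in $\tilde M$, and then to conclude by density of such flats. The opening reduction is formal: if $\phi\in\ker\rho$, then since $M$ is aspherical (it is homotopy equivalent to a locally symmetric $K(\Gamma,1)$), $\phi$ is homotopic to $\mathrm{id}_M$, and one can choose a lift $\tilde\phi:\tilde M\to\tilde M$ commuting with the deck action of $\Gamma:=\pi_1 M$. The displacement $\delta(\tilde x):=d_{\tilde g}(\tilde x,\tilde\phi\tilde x)$ is then $\Gamma$-invariant and descends to a continuous function $\bar\delta$ on the finite volume manifold $M$; the theorem follows once one shows $\bar\delta\equiv 0$.

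To force $\bar\delta\equiv 0$ I would argue maximal-abelian-subgroup by maximal-abelian-subgroup. For each maximal-rank abelian subgroup $A\cong\mathbb{Z}^r\subset\Gamma$ (with $r$ the $\mathbb{R}$-rank of the symmetric model), the paper's machinery on maximal periodic flats should produce a canonical $A$-invariant, $A$-cocompact locus $\tilde F_A\subset\tilde M$ that plays the role of a maximal flat for the metric $\tilde g$. Because $\tilde\phi$ commutes with $A$, it preserves $\tilde F_A$ and restricts there to an isometry centralizing a cocompact $\mathbb{Z}^r$-action. The no-local-torus-factor hypothesis, together with irreducibility of $\Gamma$, should then rule out nontrivial such centralizers: in the locally symmetric model the only candidates would be Clifford-type translations splitting off a flat factor, which is precisely what the hypothesis excludes. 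This forces $\tilde\phi|_{\tilde F_A}=\mathrm{id}$, so $\bar\delta$ vanishes on the projection $F_A\subset M$. Since the family $\{F_A\}$ is dense in $M$ (transporting across the homeomorphism $M\cong M_0$ the classical density of maximal periodic flats in the symmetric model), continuity of $\bar\delta$ gives $\bar\delta\equiv 0$.

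The chief obstacle is the construction and rigidity of the ``flats'' $\tilde F_A$ for the unknown metric $\tilde g$, which is only assumed complete Riemannian. Without nonpositive curvature one loses the Flat Torus Theorem, convexity of displacement, and unique axes; one must instead produce $\tilde F_A$ coarsely --- for example as a minimum-displacement set for $A$, or by pulling back the genuine maximal flat in the symmetric space $X$ via a $\Gamma$-equivariant pseudo-isometry $X\to\tilde M$ --- and then show it is rigid enough to identify a fixed locus of $\tilde\phi$. Making this precise, and verifying that cocompact centralization on $\tilde F_A$ really does force pointwise fixing (rather than, say, an a priori nontrivial translation), is where I expect the main technical content to lie; the surrounding reduction-and-density scheme is robust once that core rigidity is in hand.
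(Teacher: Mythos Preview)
Your strategy diverges sharply from the paper's, and the gap you yourself flag as the ``chief obstacle'' is fatal for an arbitrary complete metric $g$. The locus $\tilde F_A$ cannot be built so that it is simultaneously preserved by $\tilde\phi$ and dense in projection: if you transport the symmetric flat through the homeomorphism $M\cong M_0$ you get density, but $\tilde\phi$ is a $\tilde g$-isometry and has no reason to preserve this extrinsic set; if instead you define $\tilde F_A$ intrinsically from $\tilde g$ (minsets, axes, etc.), then without nonpositive curvature it may be empty or wild, and in any case density---a fact about periodic flats in the \emph{symmetric} metric---is lost. Coarse or pseudo-isometry constructions do not help, since ``$\tilde\phi$ fixes it pointwise'' is meaningless for a set defined only up to bounded error. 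Even granting a good $\tilde\phi$-invariant, $A$-cocompact $\tilde F_A$, your centralizer step also fails: the no-local-torus-factor hypothesis and the Clifford-translation picture are structural facts about $\Isom(\widetilde M,\widetilde h_{sym})$, not about isometries of an arbitrary $(\widetilde M,\tilde g)$; an isometry of a flat $r$-torus commuting with $\mathbb Z^r$ is simply a translation, and nothing in your setup forces it to be trivial.

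The paper's route is entirely different and never treats the periodic flat as a geometric sublocus of $(\widetilde M,\tilde g)$. It first uses Pettet--Souto (a maximal periodic flat cannot be homotoped to the boundary) to show $K=\ker\rho$ is a compact Lie group, reducing to ruling out homotopically trivial $\mathbb Z/p$-actions. The maximal $\mathbb Z^r<\Gamma$ then enters only through the coefficient module $\mathbb F_p[\Gamma/\mathbb Z^r]$: Smith theory on the cover gives $H_*(F;\mathbb F_p[\Gamma/\mathbb Z^r])\cong H_*(M;\mathbb F_p[\Gamma/\mathbb Z^r])$ and a degree-shifted isomorphism on closed-support homology, while a spectral-sequence computation over the rational Tits building (using Pettet--Souto's stabilizer bound) yields $H_k^e(M;\mathbb F_p[\Gamma/\mathbb Z^r])=0$ for $k\ge r$ and $H_k^{cl}(M;\mathbb F_p[\Gamma/\mathbb Z^r])=0$ for $k>r$. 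A diagram chase then forces $\dim F=\dim M$, hence $F=M$. Using $\mathbb Z^r$ homologically rather than geometrically is precisely what allows the argument to survive the absence of any curvature hypothesis on $g$.
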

If, in addition, the locally symmetric space is irreducible and of dimension $\geq 3$, then Margulis-Mostow-Prasad rigidity
shows that the group $\Out(\pi_1M)$ is represented by isometries of the locally symmetric metric. 
Thus, $\Isom(M,g)<\Isom(M,h_{sym})$ so that, in a sense, the locally symmetric metric has the most symmetry. 
If the metric $g$ has finite volume, then we also get the following dichotomy for the isometry group of the universal
cover.
\begin{thm}
\label{fw}
Let $(M,h_{sym})$ be a finite volume, irreducible, locally symmetric manifold of dimension $\geq 3.$
Suppose that $M$ has no local torus factors. 
If $g$ is any complete, finite volume Riemannian metric on $M$ then either,
\begin{itemize}
\item
$g$ is a constant multiple of the locally symmetric metric, or
\item
the isometry group of the universal cover $\Isom(\widetilde M,\widetilde g)$ is discrete and contains
$\pi_1M$ as a subgroup of index $\leq\mathrm{vol}(M,h_{sym})/\varepsilon(h_{sym}).$
\end{itemize} 
\end{thm}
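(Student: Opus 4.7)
The plan is to combine Theorem~\ref{maintheorem} with Margulis--Mostow--Prasad rigidity, a Kazhdan--Margulis lower volume bound, and the paper's technology of maximal periodic flats. First, Theorem~\ref{maintheorem} provides an injection $\rho:\Isom(M,g)\hookrightarrow\Out(\pi_1M)$, and Margulis--Mostow--Prasad (applicable under the irreducibility, $\dim\geq 3$, and no torus factor hypotheses) identifies $\Out(\pi_1M)$ with the finite group $\Isom(M,h_{sym})$. Since $\Isom(M,g)$ is a Lie group (Myers--Steenrod) and $\rho$ is continuous into a discrete group, the identity component of $\Isom(M,g)$ must lie in $\ker\rho=\{e\}$; so $\Isom(M,g)$ is discrete, and via $\rho$ finite, with $|\Isom(M,g)|\leq|\Isom(M,h_{sym})|$. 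A Kazhdan--Margulis constant $\varepsilon(h_{sym})$---a positive lower bound for the volume of any finite-volume locally symmetric orbifold modeled on $\widetilde M$---applied to $M/\Isom(M,h_{sym})$ gives $|\Isom(M,h_{sym})|\leq\vol(M,h_{sym})/\varepsilon(h_{sym})$, the bound appearing in the theorem.

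Next, setting $H=\Isom(\widetilde M,\widetilde g)$ and $\Gamma=\pi_1M$, so that $\Isom(M,g)=N_H(\Gamma)/\Gamma$, I would show that the normalizer $N_H(\Gamma)$ is always discrete. Conjugation by its identity component $N_H(\Gamma)^0$ gives a continuous homomorphism into $\Aut(\Gamma)$, which is discrete, so this homomorphism is trivial by connectedness: $N_H(\Gamma)^0$ centralizes $\Gamma$ and therefore descends to a connected subgroup of the finite group $\Isom(M,g)$, which is $\{e\}$. Hence each element of $N_H(\Gamma)^0$ acts on $\widetilde M$ as a deck transformation, placing $N_H(\Gamma)^0\subset\Gamma$, and since $\Gamma$ is discrete, $N_H(\Gamma)^0=\{e\}$. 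Combined with the cardinality bound, $[N_H(\Gamma):\Gamma]\leq\vol(M,h_{sym})/\varepsilon(h_{sym})$.

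The theorem therefore reduces to the claim that $H=N_H(\Gamma)$ whenever $g$ is not a constant multiple of $h_{sym}$---equivalently, that any \emph{extra} isometry $h\in H\setminus N_H(\Gamma)$ (one that fails to descend to $(M,g)$) must force $g=ch_{sym}$. This is where I expect the paper's theory of maximal periodic flats to do the real work. Such an $h$ conjugates $\Gamma$ into a distinct free and finite-covolume isometric action $h\Gamma h^{-1}$ on $(\widetilde M,\widetilde g)$ with quotient isometric to $(M,g)$; maximal-rank free abelian subgroups of $\Gamma$ stabilize distinguished periodic flats in $\widetilde M$ that, one expects, can be characterized intrinsically from $\widetilde g$ via their abelian stabilizers. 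Matching the flat systems for $\Gamma$ and for $h\Gamma h^{-1}$ through $h$ should pin down $\widetilde g$ to be a constant multiple of $\widetilde h_{sym}$ on each flat, and then density of the union of translates of such flats together with irreducibility and the absence of local torus factors extends this proportionality to all of $\widetilde M$. The main obstacle is precisely this last step---upgrading a single non-descending isometry, via the geometry of periodic flats, to global proportionality with the locally symmetric metric---which is the substantive geometric content of the paper.
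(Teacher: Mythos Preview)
Your first two paragraphs are correct and give clean reductions: $N_H(\Gamma)/\Gamma\cong\Isom(M,g)$ injects into $\Out(\Gamma)\cong\Isom(M,h_{sym})$ by Theorem~\ref{maintheorem} and rigidity, yielding the volume bound on $[N_H(\Gamma):\Gamma]$, and $N_H(\Gamma)$ is discrete. The problem is your third paragraph, where the real content lies.

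You propose to show that any $h\in H\setminus N_H(\Gamma)$ forces $g=ch_{sym}$ by ``matching flat systems'' for $\Gamma$ and $h\Gamma h^{-1}$ and invoking density of periodic flats. This is not an argument. Maximal periodic flats are totally geodesic for $h_{sym}$, not for an arbitrary $g$, so there is no intrinsic-in-$\widetilde g$ characterization of them to match; a single extra isometry gives only one conjugate lattice, nowhere near enough to force homogeneity; and a proportionality statement on a dense union of $r$-dimensional submanifolds does not propagate to all of $\widetilde M$ without already knowing something like transitivity. More to the point, the paper does \emph{not} use periodic flats in this step at all---they enter only in the proof of Theorem~\ref{maintheorem}.

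The paper's argument for Theorem~\ref{fw} is Lie-theoretic structure theory applied to $I:=\Isom(\widetilde M,\widetilde g)$ and its identity component $I_0$. The key external input, which crucially uses that $g$ has finite volume, is a result of Farb--Weinberger: $\Gamma_0:=\Gamma\cap I_0$ is a lattice in $I_0$. Prasad's lemma then makes the solvable radical of $I_0$ compact and the center of the semisimple quotient finite; repeated use of Theorem~\ref{maintheorem} (packaged as ``any compact subgroup of $I$ normalized by $\Gamma$ is trivial'') kills these, so $I_0$ is semisimple, centerless, with no compact factors. A splitting argument (Proposition~\ref{productlift}) combined with irreducibility of $\Gamma$ then forces a dichotomy: either a finite-index subgroup of $\Gamma$ lies in $I_0$, in which case a dimension count shows $I_0$ acts transitively and $(\widetilde M,\widetilde g)$ is a symmetric space; or $\Gamma\cap I_0$ is virtually trivial, whence $I_0$ is compact hence trivial, $I$ is discrete, and the index bound follows from Theorem~\ref{maintheorem} plus rigidity along the lines you sketched. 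What your proposal is missing is precisely this Farb--Weinberger/Prasad/irreducibility mechanism that converts ``some extra symmetry'' into ``transitive symmetry''.
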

The constant $\varepsilon(h_{sym})$ is the volume of the smallest locally symmetric orbifold covered by $(M,h_{sym}).$
It does not depend on the metric $g$. The assumption that $g$ has finite volume turns out to be necessary.
In fact, one can construct a complete infinite volume metric on $M$ such that the isometry
group of the universal cover contains $\pi_1M*\mathbb Z$ as a discrete subgroup (see 
section $12$ of \cite{avramidi}.)

\subsection*{Related work}
For closed locally symmetric spaces, Theorem \ref{maintheorem} was proved by Borel (published by Conner
and Raymond, Theorem 3.2 of \cite{connerraymond})
and Theorem \ref{fw} was proved by Farb and Weinberger (Theorem 1.7 of \cite{farbweinbergerisometries}).
For finite volume locally symmetric spaces of non-zero Euler characteristic, Theorem 2 is proved
(using $L^2$ cohomology rather than maximal periodic flats) in \cite{avramidi}. 
For noncompact locally symmetric spaces with vanishing Euler characteristic, both theorems are new. 

One also has results similar to Theorem \ref{fw} for aspherical manifolds (or orbifolds) which are not locally symmetric.
The analogous result for moduli spaces of algebraic curves---Royden's theorem---is treated in 
\cite{farbweinbergerroyden} and \cite{avramidi}. For a class of closed aspherical manifolds which are tiled by 
locally symmetric spaces (piecewise locally symmetric
spaces) Theorem \ref{fw} is proved by T$\hat{\mathrm{a}}$m Nguy$\tilde{\hat{\mathrm{e}}}$n Phan 
in \cite{tamsymmetry}. For more general aspherical Riemannian and Lorentz manifolds,
one has results about the structure of isometry groups (\cite{farbweinbergerisometries} and \cite{melnicklorentz}),
but no quantitative bound (depending only on the topology of $M$) 
on how many isometries of the universal cover are not covering translations, i.e. on 
the size of $\Isom(\widetilde M,\widetilde g)/\pi_1M$ 
(see Conjecture 1.6 in \cite{farbweinbergerisometries}.)

\subsection*{Acknowledgements} I would like to thank my advisor Shmuel Weinberger
for kindling my interest in isometries of aspherical manifolds, and for 
his guidance. I would also like to thank Benson Farb for a number 
of inspiring conversations, and T$\hat{\mathrm{a}}$m Nguy$\tilde{\hat{\mathrm{e}}}$n Phan 
for drawing my attention to the paper \cite{pettetsouto}.

\section{\label{locallysymmetricborel}Maximal periodic flats in locally symmetric spaces}
Our proof of Theorem \ref{maintheorem}
is based on a result of Pettet and Souto \cite{pettetsouto}, which says that the fundamental group
of a locally symmetric space contains a free abelian subgroup that `moves the boundary as much
as possible'. We recall it now. 

Suppose that $G$ is a connected semisimple Lie group 
with no compact or Euclidean factors.
Let $K<G$ be a maximal compact and $\Gamma<G$ a torsionfree lattice.
Suppose that $r:=\mathrm{rank}_{\mathbb R}G$ is the real rank of $G$. 
By a {\it maximal $\Gamma$-periodic flat} we will mean a totally geodesic $r$-dimensional 
flat subspace $\mathbb R^r\subset G/K$ which is invariant 
under some free abelian subgroup $\mathbb Z^r<\Gamma.$\footnote{The image of $\mathbb R^r$ under $G/K\ra\Gamma\setminus G/K$
is what \cite{pettetsouto} call a {\it maximal periodic flat}.}  

We will use the following facts. For an arithmetic locally symmetric space, these are standard (see \cite{borelserre}).
For general locally symmetric spaces, these facts are described in \cite{pettetsouto}.
The unpublished manuscript \cite{morris} is a general reference on arithmetic groups.

\begin{enumerate}
\item
The symmetric space $G/K$ has a maximal $\Gamma$-periodic flat.
\item
There is a number $q=\mathrm{rank}_{\mathbb Q}(M)$ called the $\mathbb Q$-rank of the 
locally symmetric space $M:=\Gamma\setminus G/K$ and a
$(q-1)$-dimensional simplicial complex $\Delta_{\mathbb Q}(M)$ on which the 
group $\Gamma$ acts simplicially. The complex is called the {\it rational Tits building}
of the locally symmetric space $M$. It is homotopy equivalent to an infinite
wedge of $(q-1)$-spheres, i.e. $\Delta_{\mathbb Q}(M)\cong\vee S^{q-1}.$
\item
The locally symmetric space $M$ is the interior of a compact manifold $\overline M$ 
with boundary $\partial\overline M.$ There is a $\Gamma$-equivariant homotopy equivalence

\begin{equation*}
\partial\widetilde{\overline M}\ra\Delta_{\mathbb Q}(M).
\end{equation*}
\end{enumerate}
For the rest of this paper, we will use $\mathbb R^r$ to denote a fixed maximal $\Gamma$-periodic
flat and $\mathbb Z^r<\Gamma$ a free abelian subgroup of rank $r$ which leaves that flat invariant. 
Pettet and Souto prove the following
\begin{thm}[Proposition 5.5. in \cite{pettetsouto}]
\label{stabilizers}
The stabilizer $\mathrm{Stab}_{\mathbb Z^r}(\sigma_k)$ of a $k$-simplex in the building $\Delta_{\mathbb Q}(M)$
is a free abelian group of rank $\leq r-k-1.$
\end{thm}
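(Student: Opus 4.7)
The plan is to reduce the stabilizer computation to the structure of $\Gamma\cap P_{\sigma_k}$ for the $\mathbb{Q}$-parabolic $P_{\sigma_k}$ corresponding to $\sigma_k$, then translate $\mathbb{Q}$-integrality of its characters into linear equations on a common Cartan subalgebra.

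By the standard description of the rational Tits building, $\sigma_k$ corresponds to a $\mathbb{Q}$-parabolic $P:=P_{\sigma_k}$ of $G$, namely the intersection of the $k+1$ maximal proper $\mathbb{Q}$-parabolics labeling its vertices; self-normalization of proper parabolics gives $\mathrm{Stab}_{\Gamma}(\sigma_k)=\Gamma\cap P$, and the $\mathbb{Q}$-split central torus $A_P\subset P$ has dimension $k+1$. Since the elements of $\mathbb{Z}^r$ are pairwise commuting, semisimple, and $\mathbb{R}$-regular (they translate a common maximal flat in $G/K$), they lie in a single maximal $\mathbb{R}$-split Cartan $A\subset G$, and the logarithm identifies $\mathbb{Z}^r$ with a rank-$r$ lattice in $\mathfrak{a}:=\mathrm{Lie}(A)\cong\mathbb{R}^r$, entirely inside the identity component of $A(\mathbb{R})$.

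Next I would use the key integrality input: for any $\mathbb{Q}$-rational character $\chi\colon P\to\mathbb{G}_m$ and any $\gamma\in\Gamma\cap P$, some power $\chi^m$ is polynomial in matrix entries so $\chi(\gamma)^m\in\mathbb{Z}^\times$; combined with positivity of $\chi$ on the identity component this forces $\chi(\gamma)=1$. Picking a $\mathbb{Q}$-basis $\chi_0,\ldots,\chi_k$ of the rank-$(k+1)$ lattice $X^*(P)_{\mathbb{Q}}=X^*(A_P)_{\mathbb{Q}}$, we get $k+1$ vanishing conditions, which for $\gamma\in\mathbb{Z}^r\cap P\subset A$ translate to the vanishing of the $k+1$ linear functionals $\lambda_i:=d\chi_i|_{\mathfrak{a}}\in\mathfrak{a}^*$ at $\log\gamma$. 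Hence $\mathrm{Stab}_{\mathbb{Z}^r}(\sigma_k)$ lies in the intersection of $\ker\lambda_0\cap\cdots\cap\ker\lambda_k$ with the rank-$r$ lattice $\mathbb{Z}^r$.

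The crucial step is to show the $\lambda_i$ are $\mathbb{R}$-linearly independent on $\mathfrak{a}$. After conjugation we may arrange $A_P\subset A$; then the restrictions $\lambda_i|_{\mathrm{Lie}(A_P)}$ are exactly the differentials of the $\mathbb{Q}$-basis $\chi_i$ of $X^*(A_P)_{\mathbb{Q}}$, hence form a basis of $\mathrm{Lie}(A_P)^*$. Linear independence of these restrictions forces linear independence of $\lambda_0,\ldots,\lambda_k$ on all of $\mathfrak{a}$, so $\mathrm{Stab}_{\mathbb{Z}^r}(\sigma_k)$ sits in a codimension-$(k+1)$ subspace of $\mathfrak{a}$ and is free abelian of rank at most $r-k-1$. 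The main obstacle I expect is this conjugation-and-alignment step: the $\mathbb{R}$-Cartan $A$ is determined by $\mathbb{Z}^r$ while $A_P$ comes from $\sigma_k$, so one needs the fact that every $\mathbb{Q}$-split subtorus of $G$ is $G(\mathbb{R})$-conjugate into $A$, together with a check that this conjugation preserves the intersection $\mathbb{Z}^r\cap P$ we are trying to bound.
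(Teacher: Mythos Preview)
The paper does not prove this statement; it is quoted as Proposition~5.5 of \cite{pettetsouto} and used as a black box. So there is no proof in the paper to compare yours against.

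Your outline is the natural strategy. Two comments.

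First, a small correction: elements of $\mathbb{Z}^r$ need not be $\mathbb{R}$-regular, nor need they lie in $A$. Translating a maximal flat only forces $\gamma$ into the centralizer $Z_G(A)=MA$ with $M$ compact; an element whose translation vector lies on a Weyl-chamber wall is not regular. This is harmless for your purposes: the projection $MA\to A$ followed by $\log$ still identifies $\mathbb{Z}^r$ with a rank-$r$ lattice in $\mathfrak a$, since $\mathbb{Z}^r$ acts cocompactly by translations on the flat.

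Second, the obstacle you flag at the end is genuine and is exactly where the work lies. Conjugating $A_P$ into $A$ by some $g\in G(\mathbb{R})$ replaces $P$ by $gPg^{-1}$, so the group you want to bound, $\mathbb{Z}^r\cap P$, is carried to $g(\mathbb{Z}^r\cap P)g^{-1}$, which no longer sits inside $\mathbb{Z}^r$; the alignment therefore cannot be carried out as stated. One must instead show directly that the $k+1$ conditions $\chi_i(\gamma)=1$ cut out a codimension-$(k+1)$ subspace of $\mathfrak a$ \emph{without} moving $P$. This requires understanding how the fixed $\mathbb{Q}$-parabolic $P$ interacts with the specific $\mathbb{R}$-split torus $A$ determined by the periodic flat, and that interaction is precisely the structural content of the Pettet--Souto argument. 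Your reformulation is on the right track but does not yet close this gap.
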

\begin{rem}
Heuristically, the stabilizer spans a torus which is complementary to the $(k+1)$-dimensional $\mathbb Q$-split
torus corresponding to $\sigma_k,$ and together they span a torus of rank 
$\mathrm{rank}_{\mathbb Z}(\mathrm{Stab}_{\mathbb Z^r}(\sigma_k))+k+1\leq r$. 
\end{rem}
Using this and a theorem of McMullen ($6.1$ in \cite{pettetsouto}), Pettet and Souto 
show that a maximal periodic flat cannot be homotoped to the boundary. On the level of universal covers, their theorem is

\begin{thm}[1.2 in \cite{pettetsouto}]
\label{notperipheral}
With the notations above, let $\mathbb R^r\subset G/K=\widetilde{\overline M}$
be a maximal $\Gamma$-periodic flat invariant under a free abelian subgroup $\mathbb Z^r<\Gamma.$
Then, $\mathbb R^r$ cannot be $\mathbb Z^r$-equivariantly homotoped into the boundary $\partial\widetilde{\overline M}.$
\end{thm}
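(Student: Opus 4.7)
The plan is to argue by contradiction using equivariant cohomology. Suppose such a homotopy existed. Composing with the $\Gamma$-equivariant homotopy equivalence $\partial \widetilde{\overline{M}} \ra \Delta_{\mathbb{Q}}(M)$, one would obtain a $\mathbb{Z}^r$-equivariant map $f: \mathbb{R}^r \ra \Delta_{\mathbb{Q}}(M)$, together with an equivariant homotopy in $\widetilde{\overline{M}}$ from the inclusion of the flat to a map factoring through $\partial \widetilde{\overline{M}}$.

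The first step is to reduce the problem to a cohomological nonvanishing statement. Since $\widetilde{\overline{M}}$ is contractible (a compact manifold-with-boundary whose interior is $G/K \cong \mathbb{R}^d$) and $\mathbb{Z}^r < \Gamma$ acts freely on it, both $T^r = \mathbb{Z}^r \backslash \mathbb{R}^r$ and $\mathbb{Z}^r \backslash \widetilde{\overline{M}}$ are $K(\mathbb{Z}^r, 1)$-spaces, and the inclusion induces a homotopy equivalence between them. Hence the restriction
\[
H^r_{\mathbb{Z}^r}\bigl(\widetilde{\overline{M}};\mathbb{Z}\bigr) \;\xrightarrow{\cong}\; H^r_{\mathbb{Z}^r}(\mathbb{R}^r;\mathbb{Z}) \;\cong\; H^r(T^r;\mathbb{Z}) \;\cong\; \mathbb{Z}
\]
is an isomorphism. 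If the flat can be equivariantly homotoped into the boundary, this isomorphism factors through $H^r_{\mathbb{Z}^r}(\partial \widetilde{\overline{M}}) \cong H^r_{\mathbb{Z}^r}(\Delta_{\mathbb{Q}}(M))$, forcing $H^r_{\mathbb{Z}^r}(\Delta_{\mathbb{Q}}(M);\mathbb{Z}) \neq 0$. It therefore suffices to prove the vanishing $H^r_{\mathbb{Z}^r}(\Delta_{\mathbb{Q}}(M);\mathbb{Z}) = 0$.

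The second step is to deduce this vanishing from Theorem \ref{stabilizers} via the skeletal spectral sequence for the Borel construction $E\mathbb{Z}^r \times_{\mathbb{Z}^r} \Delta_{\mathbb{Q}}(M)$. Filtering by $\mathbb{Z}^r$-invariant skeleta gives
\[
E_1^{p,q} = \bigoplus_{[\sigma_p]} H^q\bigl(B\,\mathrm{Stab}_{\mathbb{Z}^r}(\sigma_p);\mathbb{Z}\bigr) \;\Longrightarrow\; H^{p+q}_{\mathbb{Z}^r}(\Delta_{\mathbb{Q}}(M);\mathbb{Z}),
\]
summed over $\mathbb{Z}^r$-orbits of $p$-simplices (after barycentric subdivision, so stabilizers act trivially on each fixed cell). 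By Theorem \ref{stabilizers}, each stabilizer is free abelian of rank $\leq r-p-1$, so $B\,\mathrm{Stab}_{\mathbb{Z}^r}(\sigma_p)$ is a torus of dimension $\leq r-p-1$ and has trivial $H^q$ for $q \geq r-p$. Hence $E_1^{p,q} = 0$ whenever $p+q \geq r$, which gives $H^r_{\mathbb{Z}^r}(\Delta_{\mathbb{Q}}(M);\mathbb{Z}) = 0$ and the contradiction.

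The essential content is the interplay between the fundamental class $[T^r]$ in degree $r$ coming from the flat and the small simplex stabilizers in the rational Tits building provided by Theorem \ref{stabilizers}. The main expected difficulty is being careful with the spectral sequence setup (trivial coefficient action after subdivision, contractibility of the Borel--Serre bordification), but these points are formal and standard. McMullen's Theorem 6.1 of \cite{pettetsouto} presumably packages this dimension-versus-stabilizer trade-off into a direct statement applicable to $f$, but the spectral sequence computation above makes the mechanism transparent.
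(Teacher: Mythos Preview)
Your argument is correct and essentially identical to the paper's. The paper phrases the reduction as ``an equivariant homotopy into $\partial\widetilde{\overline M}$ yields a section of the bundle $(\mathbb R^r\times\Delta_{\mathbb Q}(M))/\mathbb Z^r\to\mathbb T^r$, hence $H_r$ of the total space is nonzero,'' whereas you phrase it dually as ``the isomorphism $H^r_{\mathbb Z^r}(\widetilde{\overline M})\to H^r_{\mathbb Z^r}(\mathbb R^r)$ would factor through $H^r_{\mathbb Z^r}(\Delta_{\mathbb Q}(M))$''; since $\mathbb R^r$ is a model for $E\mathbb Z^r$, the Borel construction $E\mathbb Z^r\times_{\mathbb Z^r}\Delta_{\mathbb Q}(M)$ is exactly the paper's bundle, and both of you then run the same skeletal spectral sequence with the same stabilizer input from Theorem~\ref{stabilizers} to kill (co)homology in total degree $\geq r$.
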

The following consequence, pointed out to us by T$\hat{\mathrm{a}}$m Nguy$\tilde{\hat{\mathrm{e}}}$n Phan, 
is the reason for our initial interest in 
maximal periodic flats. 

\begin{cor}
\label{compact}
If $(M,g)$ is a complete, Riemannian manifold homeomorphic to a finite
volume, aspherical locally symmetric space with no local torus factors, 
then the group of homotopically trivial isometries
$K:=\ker(\rho:\Isom(M,g)\ra\Out(\pi_1M))$ is a compact Lie group.
\end{cor}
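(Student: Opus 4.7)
My plan is to establish two facts separately: that $K$ is a Lie group, which is essentially formal, and that $K$ is compact, which is the substantive claim.

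For the first fact, by Myers--Steenrod (using completeness of $g$), $\Isom(M,g)$ is a finite-dimensional Lie group acting properly on $M$. Any element of the identity component $\Isom^{0}(M,g)$ is isotopic through isometries to $\mathrm{id}_M$, hence induces the trivial element of $\Out(\pi_1M)$. So $\rho$ is continuous with respect to the discrete topology on $\Out(\pi_1M)$, $K=\ker\rho$ is open and closed in $\Isom(M,g)$, and therefore a Lie subgroup.

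For compactness, I would first identify $K$ with a centralizer in the isometry group of the universal cover. Given $\phi\in K$, the automorphism $\phi_*$ of $\pi_1M$ is inner, so $\phi$ admits a lift $\widetilde\phi\in\Isom(\widetilde M,\widetilde g)$ commuting with $\pi_1M$. Any two such lifts differ by an element of $Z(\pi_1M)$, which is trivial since $\pi_1M$ is a torsion-free lattice in a semisimple Lie group with no compact or Euclidean factors; so $\widetilde\phi$ is unique, and $\phi\mapsto\widetilde\phi$ is a continuous isomorphism
\[
K\ \cong\ Z\ :=\ C_{\Isom(\widetilde M,\widetilde g)}(\pi_1M).
\]
Since $\Isom(\widetilde M,\widetilde g)$ acts properly on $\widetilde M$, so does $Z$, and point stabilisers in $Z$ are compact; therefore compactness of $K$ reduces to exhibiting a bounded $Z$-orbit in $\widetilde M$.

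To do so, fix $x_0\in\mathbb R^r$ on the maximal $\Gamma$-periodic flat. The commutation $[Z,\mathbb Z^r]=1$ makes $z\cdot\mathbb R^r$ a $\mathbb Z^r$-invariant subset of $\widetilde M$ for every $z\in Z$, and for $z$ in the identity component $Z^{0}$, any path from $e$ to $z$ inside $Z^{0}$ realises a $\mathbb Z^r$-equivariant homotopy from the inclusion $\mathbb R^r\hookrightarrow\widetilde M$ to the map $y\mapsto zy$. My plan is to argue by contradiction: if $Z\cdot x_0$ were unbounded, properness would yield a sequence $z_n\in Z$ with $d(z_n x_0,x_0)\to\infty$, so $z_n$ would carry a fundamental domain of $\mathbb Z^r$ in $\mathbb R^r$ arbitrarily deep into the end of $\widetilde M$ and $z_n\cdot\mathbb R^r$ would accumulate on $\partial\widetilde{\overline M}$; the corresponding $\mathbb Z^r$-equivariant homotopies, passed to a limit in the compactification $\widetilde{\overline M}$, would produce a $\mathbb Z^r$-equivariant homotopy of $\mathbb R^r$ into $\partial\widetilde{\overline M}$, contradicting Theorem~\ref{notperipheral}. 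I expect the main obstacle to be this limit argument in the case when the divergent sequence $(z_n)$ visits infinitely many components of $Z$, so that the path-in-$Z^{0}$ construction does not directly apply; there one must combine properness of the $\Isom(\widetilde M,\widetilde g)$-action at infinity with a compactness and diagonalisation argument in the space of $\mathbb Z^r$-equivariant maps $\mathbb R^r\to\widetilde{\overline M}$.
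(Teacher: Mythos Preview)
Your reduction to showing that $K$ (equivalently the centralizer $Z$) has a bounded orbit is correct and clean, and the Lie-group part is fine. The gap is in how you build the $\mathbb Z^r$-equivariant homotopy: you use paths inside $Z^0$, which only handles the identity component, and then propose an unspecified limit/diagonalisation argument for the other components. That obstacle is self-inflicted.

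The point you are missing is that $M$ is aspherical. Any $\phi\in K$, in any component, is homotopic to $\mathrm{id}_M$ through continuous maps (not isometries), simply because $\phi$ induces an inner automorphism of $\pi_1M$. Lifting such a homotopy to $\widetilde M$ starting from the identity gives, by uniqueness of path-lifts, a $\Gamma$-equivariant and hence $\mathbb Z^r$-equivariant homotopy from $\mathrm{id}_{\widetilde M}$ to the canonical lift $\widetilde\phi$. No path in $Z$ is required, and the argument is uniform across components. With this in hand no limit is needed either: properness of the $K$-action on $M$ (downstairs, where the periodic torus $\mathbb T^r$ is compact) yields a single $\phi\in K$ sending the compact core $\overline M\setminus N$ into the collar $N$; the lifted homotopy then carries $\mathbb R^r$ into the preimage of $N$, and the equivariant retraction of the collar onto $\partial\widetilde{\overline M}$ gives the contradiction with Theorem~\ref{notperipheral}. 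This is exactly the paper's argument; your detour through $Z^0$ and sequences $z_n$ is what created the difficulty you flagged.
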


\begin{proof}
The group $K$ is a Lie group by the Myers-Steenrod theorem \cite{myerssteenrod}.
Let $N\subset\overline M$ be a collar neighborhood of the boundary.
If the group $K$ is not compact, then there is a homotopically
trivial isometry $\phi$ which sends $\overline M\setminus N$
into the collar neighborhood of the boundary $N.$ This isometry defines a $\mathbb Z^r$-equivariant
homotopy of a maximal periodic flat into a neighborhood of the boundary, contradicting theorem \ref{notperipheral}. 
\end{proof}

We will deduce Theorem \ref{notperipheral} from Theorem \ref{stabilizers} using equivariant homology
instead of McMullen's theorem. This seems to be a simplification, and in any case we need 
it for our proof of Theorem \ref{maintheorem}.

First, we will restate Theorem \ref{notperipheral} in a slightly different form.
Notice that if the flat $\mathbb R^r$ can be $\mathbb Z^r$-equivariantly 
homotoped to the boundary, then we have a $\mathbb Z^r$-equivariant map 
$s:\mathbb R^r\ra\partial\widetilde{\overline M}\ra\Delta_{\mathbb Q}(M).$
Give the product $\mathbb R^r\times\Delta_{\mathbb Q}(M)$ the diagonal $\mathbb Z^r$-action.
By projecting onto the first factor, we get a bundle 
$(\mathbb R^r\times\Delta_{\mathbb Q}(M))/\mathbb Z^r\ra\mathbb T^r$
over the torus $\mathbb T^r$ with fibre $\Delta_{\mathbb Q}(M).$
The $\mathbb Z^r$-equivariant map 

\begin{equation*}
\mathbb R^r\stackrel{v\mapsto(v,v)}\longrightarrow
\mathbb R^r\times\mathbb R^r\stackrel{id\times s}\longrightarrow\mathbb R^r\times\Delta_{\mathbb Q}(M)\end{equation*}
gives a section of this bundle. Thus, theorem \ref{notperipheral} follows from 

\begin{thm}
\label{nosec}
Let $\mathbb Z^r<\Gamma$ be a free abelian subgroup leaving
invariant a maximal $\Gamma$-periodic flat $\mathbb R^r\subset \widetilde M.$ 
Then, the bundle 
\begin{equation}
\label{bundle}
\Delta_{\mathbb Q}(M)\ra {\mathbb R^r\times\Delta_{\mathbb Q}(M)\over\mathbb Z^r}\ra\mathbb T^r
\end{equation}
does not have a section.
\end{thm}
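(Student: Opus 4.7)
My plan is to rephrase ``there is no section'' as a statement in equivariant cohomology, and then use the stabilizer bound of Theorem \ref{stabilizers} to kill the relevant equivariant cohomology group in top degree.

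First I would identify the total space with a Borel construction. Since $\mathbb R^r$ is a contractible free $\mathbb Z^r$-space, it is a model for $E\mathbb Z^r$, so
\[
E := (\mathbb R^r\times\Delta_{\mathbb Q}(M))/\mathbb Z^r \simeq \Delta_{\mathbb Q}(M)\times_{\mathbb Z^r}E\mathbb Z^r,
\]
and the projection $E\to\mathbb T^r$ is the classifying map. Thus $H^*(E;\mathbb Q)=H^*_{\mathbb Z^r}(\Delta_{\mathbb Q}(M);\mathbb Q)$. If a section $s:\mathbb T^r\to E$ existed then $s^*p^*=\mathrm{id}$, so $p^*$ would be injective; in particular,
\[
p^*:H^r(\mathbb T^r;\mathbb Q)=\mathbb Q \hookrightarrow H^r_{\mathbb Z^r}(\Delta_{\mathbb Q}(M);\mathbb Q).
\]
So it suffices to show that $H^r_{\mathbb Z^r}(\Delta_{\mathbb Q}(M);\mathbb Q)=0$.

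Next I would feed Theorem \ref{stabilizers} into the equivariant-cellular spectral sequence
\[
E_1^{p,q}=\bigoplus_{[\sigma_p]}H^q\bigl(\mathrm{Stab}_{\mathbb Z^r}(\sigma_p);\mathbb Q\bigr)\;\Longrightarrow\;H^{p+q}_{\mathbb Z^r}(\Delta_{\mathbb Q}(M);\mathbb Q),
\]
where the sum is over $\mathbb Z^r$-orbits of $p$-simplices. By Theorem \ref{stabilizers}, each stabilizer is free abelian of rank $\leq r-p-1$, hence has rational cohomological dimension $\leq r-p-1$. Therefore $E_1^{p,q}=0$ whenever $q\geq r-p$, i.e.\ whenever $p+q\geq r$. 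Consequently every term in total degree $\geq r$ vanishes, giving $H^n_{\mathbb Z^r}(\Delta_{\mathbb Q}(M);\mathbb Q)=0$ for all $n\geq r$, which contradicts the injectivity derived from a hypothetical section.

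The main technical point to check carefully is the applicability of the spectral sequence: $\Delta_{\mathbb Q}(M)$ is an infinite wedge of $(q-1)$-spheres, so I must confirm the $\mathbb Z^r$-CW structure (inherited from the simplicial $\Gamma$-action) makes the spectral sequence well-defined and convergent. With rational coefficients and a simplicial action, this is standard, but it is the one spot where some care is required. The combinatorial bound from Theorem \ref{stabilizers} then does all the work; one should note that it is used with genuine equality of bound, in the sense that the vanishing line $p+q=r$ is exactly the one needed to kill degree $r$, which is precisely the degree where the fundamental class of $\mathbb T^r$ lives.
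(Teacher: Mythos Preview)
Your argument is correct and is essentially the paper's own proof dualized: the paper runs the same equivariant spectral sequence (in homology with $\mathbb Z$ coefficients rather than cohomology with $\mathbb Q$), uses Theorem~\ref{stabilizers} in exactly the same way to obtain $E^1_{s,t}=0$ for $s+t\geq r$, and concludes that $H_r\bigl((\mathbb R^r\times\Delta_{\mathbb Q}(M))/\mathbb Z^r\bigr)=0$, which rules out a section since $s_*[\mathbb T^r]$ would be a nonzero class. One small technical slip: for cohomology over an infinite orbit set your $\bigoplus$ should really be a $\prod$, but since each factor vanishes this does not affect the conclusion (and in any case $\Delta_{\mathbb Q}(M)$ is finite-dimensional, so convergence is not an issue).
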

\begin{proof}
We will show that $(\mathbb R^r\times\Delta_{\mathbb Q}(M))/\mathbb Z^r$ has no 
homology in dimension $r,$ which implies that (\ref{bundle}) cannot have a section.
(If there is a section $s,$ then $s(\mathbb T^r)$ represents a non-trivial homology class.)
The double complex
 
\begin{equation}
E^0_{s,t}:=C_s(\mathbb R^r)\otimes_{\mathbb Z^r}C_t(\Delta_{\mathbb Q}(M))
\end{equation}
has the spectral sequence
\begin{equation}
E^1_{s,t}:=H_s(\mathbb Z^r;C_t(\Delta_{\mathbb Q}(M)))\implies H_{s+t}
\left({\mathbb R^r\times\Delta_{\mathbb Q}(M)\over\mathbb Z^r}\right)
\end{equation}
associated to it. We will use the fact that $\mathbb Z^r$ acts with small stabilizers on the 
rational Tits building $\Delta_{\mathbb Q}(M).$
Note that  

\begin{eqnarray*}
\label{stabilize}
E^1_{s,t}:&=&H_s(\mathbb Z^r;C_t(\Delta_{\mathbb Q}(M)))\\
&=&H_s(\mathbb Z^r;\bigoplus_{\sigma_t}\mathbb Z^r\cdot\sigma_t)\\
&=&\bigoplus_{\sigma_t}H_s(\mathbb Z^r;\mathbb Z^r\cdot\sigma_t)\\
&=&\bigoplus_{\sigma_t}H_s(\mathrm{Stab}_{\mathbb Z^r}(\sigma_t)),
\end{eqnarray*}
where the sum is taken over all $\mathbb Z^r$-orbits of $t$-simplices.
By the result of Pettet-Souto (Theorem \ref{stabilizers}), 
$\mathrm{Stab}_{\mathbb Z^r}(\sigma_t)$ is a free abelian group of rank $\leq r-t-1,$
so we find that $E^1_{s,t}=0$ for $s\geq r-t,$ i.e. for $s+t\geq r.$ Since this spectral sequence converges to the
homology of $(\mathbb R^r\times\Delta_{\mathbb Q}(M))/\mathbb Z^r$, we find that 

\begin{equation}
\label{zero}
H_k\left({\mathbb R^r\times\Delta_{\mathbb Q}(M)\over \mathbb Z^r}\right)=0 \hspace{1cm}\mbox {for } k\geq r.
\end{equation}
This proves Theorem \ref{nosec}.
\end{proof}

\subsection*{Homology of $\partial\widetilde{\overline M}/\mathbb Z^r$}
For future use, we rephrase the above computation in terms of the homology of the Borel-Serre boundary.
We have homotopy equivalences 
\begin{equation}
\label{hequiv}
\partial\widetilde{\overline M}/\mathbb Z^r\leftarrow
(\mathbb R^r\times\partial\widetilde{\overline M})/\mathbb Z^r\ra(\mathbb R^r\times\Delta_{\mathbb Q}(M))/\mathbb Z^r.
\end{equation}
The left map is a homotopy equivalence because it is the projection
map of a bundle with fibre $\mathbb R^r.$ The right map is the obvious 
homotopy equivalence obtained from the $\mathbb Z^r$-equivariant
homotopy equivalence $\partial\widetilde{\overline M}\ra\Delta_{\mathbb Q}(M).$
Thus, equation (\ref{zero}) shows that the homology of the Borel-Serre boundary
of the $\mathbb Z^r$ cover vanishes in dimensions $\geq r$, i.e. 
\begin{equation}
H_k(\partial\widetilde{\overline M}/\mathbb Z^r)=0\hspace{1cm}\mbox {for } k\geq r.
\end{equation} 
This equation, the long exact homology sequence
\begin{equation}
\cdots\ra H_*(\widetilde{\overline M}/\mathbb Z^r)\ra H_*(\widetilde{\overline M}/\mathbb Z^r,\partial\widetilde{\overline M}/\mathbb Z^r)
\ra H_{*-1}(\partial\widetilde{\overline M}/\mathbb Z^r)\ra\cdots
\end{equation}
and the fact that $\widetilde{\overline M}/\mathbb Z^r$ has no homology above
dimension $r$ (it is homotopy equivalent to the $r$-torus) 
implies that 
\begin{equation}
H_k(\widetilde{\overline M}/\mathbb Z^r,\partial\widetilde{\overline M}/\mathbb Z^r)=0\hspace{1cm}\mbox {for } k>r.
\end{equation}
Everything we've said so far is valid for homology with coefficients in $\mathbb Z,$ and also
with coefficients in the field $\mathbb F_p$ of $p$ elements. 
It is useful to rewrite this in terms of homology with local coefficients in the
$\mathbb F_p[\Gamma]$ module $\mathbb F_p[\Gamma/\mathbb Z^r]:$

\begin{eqnarray}
\label{one}
H_k(\partial\overline M;\mathbb F_p[\Gamma/\mathbb Z^r])&=&0\hspace{1cm}\mbox {for } k\geq r,\\
\label{two}
H_k(\overline M,\partial\overline M;\mathbb F_p[\Gamma/\mathbb Z^r])&=&0\hspace{1cm}\mbox {for } k>r.
\end{eqnarray}
\section{Homotopically trivial $\mathbb Z/p$-actions}
\subsection{Outline of proof of Theorem \ref{maintheorem}}
We have seen that the group $K=\ker(\rho)$ of homotopically
trivial isometries is a compact Lie group (Corollary \ref{compact}). 
Thus, to show this group is trivial we only need to check that there are no elements of prime order $p$.
In other words, we need to show that the locally symmetric space 
$M$ has no non-trivial, homotopically trivial $\mathbb Z/p$-actions.
We lift the $\mathbb Z/p$-action to the cover $\widetilde{\overline M}/\mathbb Z^r$, 
look at the fixed point set `near the boundary' of this cover, and use 
(\ref{one}) and (\ref{two}) to show the fixed point set is everything.

The $\mathbb Z/p$-action may not extend to the Borel-Serre
boundary of $M$, so we need to replace homology of the boundary (\ref{one}) 
and homology relative to the boundary (\ref{two}) 
by homology of the end and homology with closed supports, respectively. 
We recall these notions in the next two subsections.

 
\subsection{Homology with closed supports}
Given a $\Gamma$-cover $\widetilde X\ra X$, denote by $C_*^{cl}(X;V)$
the complex of chains with {\it closed} support on $X$ and coefficients in the $\mathbb F_p[\Gamma]$-module $V$.
More precisely, first define $C_*^{cl}(X;\mathbb F_p[\Gamma])$ as the complex of those chains on the cover $\widetilde X$
which for every compact set $K\subset \widetilde X$ meet only finitely many $\Gamma$-translates of $K.$ 
Then, define $C_*^{cl}(X;V):=C_*^{cl}(X;\mathbb F_p[\Gamma])\otimes_{\mathbb F_p[\Gamma]}V.$  

If $M$ is the interior of a compact manifold with boundary, $\partial M\times(0,\infty)$ is an
open neighborhood of the boundary, and $M_0:=M\setminus (\partial M\times(0,\infty))$ its complement,
then the relative homology of the pair $(M_0,\partial M_0)$ is isomorphic to homology with closed supports
on $M$ via 
\begin{eqnarray}
\label{relative}
H_*(M_0,\partial M_0;V)&\cong &H_*^{cl}(M;V),\\
(c,\partial c)&\mapsto& c\cup_{\partial c} \partial c\times[0,\infty). 
\end{eqnarray}
One way to see this is to note that the map is Poincare dual (V.9.2 in \cite{bredonsheaf}) to the
cohomology isomorphism $H^{m-*}(M;\mathcal O\otimes V)\cong H^{m-*}(M_0;\mathcal O\otimes V)$, where $\mathcal O$ is the orientation module. 
\subsection{Homology of the end}
The complex $C^e_*(X;V)$ of {\it chains on the end of $X$} is defined
to be the quotient

\begin{equation}
\label{formalboundary}
0\ra C_{*+1}(X;V)\ra C_{*+1}^{cl}(X;V)\ra C^e_*(X;V)\ra 0.
\end{equation}
Denote the homology of this complex by $H^e_*(X;V):=H_*(C^e_*(X;V)).$
Associated to the short exact sequence of chain complexes (\ref{formalboundary}) there is a long exact homology sequence
\begin{equation}
\label{les}
\cdots\ra H^{cl}_{*+1}(X;V)\ra H^e_{*}(X;V)\ra H_{*}(X;V)\ra\cdots.
\end{equation}

Putting together the long exact homology sequence of the pair $(M_0,\partial M_0)$ with the long exact homology
sequence (\ref{les}), we get a commutative diagram
\begin{equation*}
\begin{array}{ccccccccc}
\cdots&\ra &H_{*+1}(M_0,\partial M_0;V)&\ra &H_{*}(\partial M_0;V)&\ra &H_{*}(M_0;V)&\ra&\cdots\\
      &    &\downarrow   &    &\downarrow                            &    &\downarrow         &   &\\
\cdots&\ra &H_{*+1}^{cl}(M;V)&\ra &H^e_{*}(M;V)         &\ra &H_{*}(M;V)       &\ra&\cdots.
\end{array}
\end{equation*}
The left vertical arrow is an isomorphism by (\ref{relative}) and the 
right vertical arrow is an isomorphism because (ordinary) homology is homotopy invariant, so 
the middle arrow is an isomorphism 

\begin{eqnarray}
\label{boundary}
H_*(\partial M_0;V)&\cong&H^e_*(M;V),\\
\nonumber
a&\mapsto& a\times[0,\infty).
\end{eqnarray}
Putting together (\ref{one}), (\ref{two}), (\ref{relative}) and (\ref{boundary}) we get  
\begin{eqnarray}
\label{end}
H_k^e(M;\mathbb F_p[\Gamma/\mathbb Z^r])=0 \mbox{ for } k\geq r,\\
\label{closedzero}
H_k^{cl}(M;\mathbb F_p[\Gamma/\mathbb Z^r])=0 \mbox{ for } k>r.
\end{eqnarray}

\subsection{Proof of Theorem \ref{maintheorem}}
Let $F\subset M$ be the fixed point set of a homotopically trivial $\mathbb Z/p$-action.
Let $f$ be the dimension of $F$ and let $m$ be the dimension of $M$. Our goal is to show that 
$f=m$, i.e. that the fixed point set has the same dimension as the manifold $M$. Since the fixed point
set is a closed submanifold of $M$, this will show that it is everything, i.e. that the $\mathbb Z/p$-action is
trivial.

The inclusion $F\hookrightarrow M$ induces maps on homology, 
homology with closed supports and homology of the end with coefficients in $V:=\mathbb F_p[\Gamma/\mathbb Z^r]$.
These fit together in the commutative diagram
\begin{equation*}
\begin{array}{ccccccc}
&         &0                                       &    &\mathbb F_p                           &&\\
&         &||                                     &    &||                                  &&\\
\cdots&\ra&H_r^e(M;V)&\ra &H_r(M;V)&\ra&\cdots\\
         &&\uparrow                               &    &||                        &   &\\
H_{r+1}^{cl}(F;V)&\ra&H_r^e(F;V)&\stackrel{\psi}\ra&H_r(F;V)&\ra&H_{r}^{cl}(F;V)\\
||&         &                                       &    &               &&||\\
H_{r+1+m-f}^{cl}(M;V)&&&   &      &&H_{r+m-f}^{cl}(M;V),         
\end{array}
\end{equation*}
whose rows are long exact homology sequences. 
The top row is computed using (\ref{end}) and the fact that 
$H_r(M;\mathbb F_p[\Gamma/\mathbb Z^r])=H_r(\widetilde M/\mathbb Z^r;\mathbb F_p)=\mathbb F_p.$
The vertical isomorphisms
\begin{eqnarray}
\label{smith1}
H_*(F;\mathbb F_p[\Gamma/\mathbb Z^r])&\cong& H_*(M;\mathbb F_p[\Gamma/\mathbb Z^r]),\\
\label{smith2}
H_{*+m-f}^{cl}(M;\mathbb F_p[\Gamma/\mathbb Z^r])&\cong&H_*^{cl}(F;\mathbb F_p[\Gamma/\mathbb Z^r]),
\end{eqnarray}
follow from Smith theory and will be proved below in Theorem \ref{coeffsmith}.
If $m>f$ then equation (\ref{closedzero}) shows that $\psi$ is an isomorphism.
This is a contradiction, since the commutative diagram shows that $\psi$ is the zero map. 
Thus, $m=f,$ which means the fixed point set is the entire manifold $M$, i.e. the homotopically trivial
$\mathbb Z/p$ action is actually trivial. To complete the proof of Theorem \ref{maintheorem},
we now deduce the Smith theory isomorphisms. 


\section{\label{Smith theory}Smith theory}
\subsection{\label{lifts}Lifted actions}

Let $f:M\ra M$ be a homeomorphism of the manifold $M$.
A {\it lift} of this homeomorphism to the universal cover $\widetilde M\ra M$ is a homeomorphism
$\widetilde f$ making the diagram

\begin{equation}
\begin{array}{ccc}
\widetilde M&\stackrel{\widetilde f}\longrightarrow &\widetilde M\\
\downarrow& &\downarrow\\
M&\stackrel{f}\longrightarrow &M
\end{array}
\end{equation}
commute. Composing $\widetilde f$ with a covering translation $\gamma\in\Gamma:=\pi_1M$ gives
another lift $\gamma\widetilde f$ of $f.$ If $G$ is a group acting by homeomorphisms on the manifold
$M$ then the lifts $L$ of these homeomorphisms to the universal cover form a group extension
\begin{equation*}
\label{liftsextension}
1\ra\Gamma\ra L\ra G\ra 1.
\end{equation*}
It is well known (see 8.8. in \cite{maclane}) that any group extension 
is determined by
\begin{enumerate}
\item
a representation 
$\rho:G\ra\Out(\Gamma),$ and
\item
a class in the cohomology group\footnote{The coefficients $Z(\Gamma)$ are a $G$-module via $\rho$.} $H^2(G;Z(\Gamma))$. 
\end{enumerate}
In the special case when the homomorphism $\rho$ and the center $Z(\Gamma)$ are both trivial
we get the trivial extension. Here is a short proof of this fact. 
\begin{prp}
\label{productlift}
If $1\ra \Gamma\ra L\ra G\ra 1$ is a group extension, $\Gamma$ has trivial center and the conjugation
homomorphism $\rho:G\ra\Out(\Gamma)$
is trivial, then the extension $L$ is isomorphic to the product $\Gamma\times G.$
\end{prp}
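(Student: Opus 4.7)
The plan is to construct a complementary subgroup to $\Gamma$ in $L$ that centralizes $\Gamma$; this will give a direct product decomposition $L\cong\Gamma\times G$ rather than just a semidirect one. The vanishing of $Z(\Gamma)$ will be used to pin down a canonical choice, and the triviality of $\rho$ will guarantee that every lift can be corrected by an element of $\Gamma$ to centralize $\Gamma$.

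First I would set up a canonical "correction" map $\phi:L\to\Gamma$. For each $\ell\in L$, conjugation $c_\ell(\gamma):=\ell\gamma\ell^{-1}$ is an automorphism of $\Gamma$ whose outer class depends only on $\pi(\ell)\in G$, where $\pi:L\to G$. Since $\rho$ is trivial, $c_\ell\in\Inn(\Gamma)$, so $c_\ell=c_\gamma$ for some $\gamma\in\Gamma$, and because $Z(\Gamma)=1$ this $\gamma$ is unique. Define $\phi(\ell)$ to be this unique element; the equality $c_{\ell_1\ell_2}=c_{\ell_1}c_{\ell_2}=c_{\phi(\ell_1)\phi(\ell_2)}$ combined with uniqueness shows $\phi$ is a homomorphism, and clearly $\phi|_\Gamma=\mathrm{id}_\Gamma$.

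Next I would study the map $s:L\to L$ defined by $s(\ell):=\phi(\ell)^{-1}\ell$. The identity $c_\ell=c_{\phi(\ell)}$ immediately gives $s(\ell)\in C_L(\Gamma)$. Checking that $s$ is a homomorphism is the one calculation: using $\ell_1\phi(\ell_2)^{-1}=\phi(\ell_1)\phi(\ell_2)^{-1}\phi(\ell_1)^{-1}\ell_1$ (a restatement of $c_{\ell_1}=c_{\phi(\ell_1)}$), one computes
\[
s(\ell_1)s(\ell_2)=\phi(\ell_1)^{-1}\ell_1\phi(\ell_2)^{-1}\ell_2=\phi(\ell_2)^{-1}\phi(\ell_1)^{-1}\ell_1\ell_2=\phi(\ell_1\ell_2)^{-1}\ell_1\ell_2=s(\ell_1\ell_2).
\]
Because $\phi|_\Gamma=\mathrm{id}$, the kernel of $s$ is exactly $\Gamma$, and since $\phi$ vanishes on $C_L(\Gamma)$ (by uniqueness, as elements centralizing $\Gamma$ induce the identity inner automorphism), $s$ restricts to the identity on $C_L(\Gamma)$, so $s$ is a retraction of $L$ onto $C_L(\Gamma)$.

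To finish, I would assemble the pieces. The factorization $\ell=\phi(\ell)\cdot s(\ell)$ shows $L=\Gamma\cdot C_L(\Gamma)$; the intersection $\Gamma\cap C_L(\Gamma)=Z(\Gamma)$ is trivial by hypothesis; and the two factors commute by definition of centralizer. Hence $L=\Gamma\times C_L(\Gamma)$. Finally, $\pi$ restricted to $C_L(\Gamma)$ has kernel $\Gamma\cap C_L(\Gamma)=1$ and is surjective because any lift $\ell$ of $g\in G$ yields the lift $s(\ell)\in C_L(\Gamma)$, so $C_L(\Gamma)\cong G$ and the proposition follows. The main obstacle is purely bookkeeping: verifying that $s$ is multiplicative, which pivots on the single commutation identity above; no deeper input is needed.
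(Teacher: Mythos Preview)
Your proof is correct and follows essentially the same approach as the paper: both construct the homomorphism $\phi:L\to\Gamma$ sending $\ell$ to the unique $\gamma$ with $c_\ell=c_\gamma$, and use it to split the extension. You simply fill in more detail than the paper does, in particular identifying $\ker\phi$ explicitly as $C_L(\Gamma)$ and verifying the multiplicativity of $s$ by hand; the paper just asserts that $\phi$ is a homomorphism splitting the extension and leaves the rest to the reader.
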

\begin{proof}
Since the homomorphism $\rho$ is trivial, for every $g\in L$ there is $\gamma(g)$ in $\Gamma$ such
that conjugation by $g$ agrees with conjugation by $\gamma(g)$ on $\Gamma.$ The element $\gamma(g)$ is 
unique because $\Gamma$ has trivial center, so we get a homomorphism $L\ra \Gamma, g\mapsto \gamma(g)$
which splits the extension. 
\end{proof}

\subsection{Lifted fixed point sets}
Since the locally symmetric space $M$ has no local torus factors,
its fundamental group is centerless, so Proposition \ref{productlift} implies the homotopically 
trivial $\mathbb Z/p$-action on $M$ lifts to a $\mathbb Z/p$-action on the universal cover which commutes with the 
action of the fundamental group $\Gamma:=\pi_1M$ by covering translations. 

\begin{lem}
\label{cover}
If $\phi\in\mathbb Z/p$ preserves the $\Gamma$-orbit of a point $x\in\widetilde M,$ then $\phi$ fixes the orbit pointwise.
\end{lem}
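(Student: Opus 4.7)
The plan is to exploit two things: that the lifted $\mathbb Z/p$-action commutes with the $\Gamma$-action on $\widetilde M$ (by Proposition \ref{productlift}, using that $\pi_1 M$ is centerless since $M$ has no local torus factors), and that $\Gamma$ acts freely on $\widetilde M$ and is torsionfree (as a torsionfree lattice in $G$).

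Suppose $\phi\in\mathbb Z/p$ preserves the $\Gamma$-orbit of $x$, so $\phi(x)=\gamma x$ for some $\gamma\in\Gamma$. The first step is to iterate: since $\phi$ and $\gamma$ commute in the lifted action, induction gives $\phi^k(x)=\gamma^k x$ for every $k\geq 0$. Applying this with $k=p$ and using $\phi^p=\mathrm{id}$ yields $\gamma^p x=x$. Because $\Gamma$ acts freely on $\widetilde M$, this forces $\gamma^p=1$, and torsionfreeness of $\Gamma$ then forces $\gamma=1$. So $\phi(x)=x$.

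Finally, for any other point $\gamma' x$ in the orbit, $\phi(\gamma' x)=\gamma'\phi(x)=\gamma' x$, again by commutativity of the two actions. Hence $\phi$ fixes the orbit pointwise.

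There is no real obstacle here; the only thing to be careful about is invoking Proposition \ref{productlift} to guarantee that the lift of the $\mathbb Z/p$-action can be chosen to commute with $\Gamma$ (which is what makes the identity $\phi\gamma=\gamma\phi$ legal and lets the inductive iteration go through). Once that commutativity is in hand, the torsionfreeness of $\Gamma$ finishes the argument in one line.
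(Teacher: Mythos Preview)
Your proof is correct and follows essentially the same approach as the paper: write $\phi(x)=\gamma x$, use commutativity of the lifted $\mathbb Z/p$-action with $\Gamma$ to get $\phi^p(x)=\gamma^p x$, then invoke freeness and torsionfreeness of $\Gamma$ to conclude $\gamma=1$, and finish by applying commutativity once more to the rest of the orbit. The only cosmetic difference is that the paper writes out the iteration as an explicit chain of equalities rather than phrasing it as an induction.
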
 

\begin{proof}
If $\phi(\Gamma x)=\Gamma x,$ then there is 
$\gamma\in\Gamma$ such that 
$\phi(x)=\gamma x.$
Since $\phi$ and $\gamma$ commute and $\phi\in\mathbb Z/p$, we find

\begin{equation}
x=\phi^p(x)=\phi^{p-1}\phi(x)=\phi^{p-1}\gamma x=\gamma\phi^{p-1}(x)=\cdots=\gamma^px.
\end{equation}
Thus, the element $\gamma^p\in\Gamma$ fixes the point $x\in\widetilde M.$ 
Since the fundamental group acts freely on $\widetilde M$, we conclude $\gamma^p=1.$
The fundamental group of the aspherical manifold $M$ is torsionfree, so we must have $\gamma=1.$
Consequently, $\phi(x)=x$ and for any $\tau\in\Gamma$

\begin{equation}
\phi(\tau x)=\tau\phi(x)=\tau x,
\end{equation}  
so $\phi$ fixes the entire orbit $\Gamma x$ pointwise. 
\end{proof}

The lemma shows that the fixed point set $F$ is the projection
$\widetilde F/\Gamma$ of the fixed point set $\widetilde F$ of the $\mathbb Z/p$-action on the universal cover $\widetilde M.$ 
In other words, we have the commutative diagram 

\begin{equation}
\label{fixedsetsandcovers}
\begin{array}{ccc}
\widetilde F&\hookrightarrow&\widetilde{M}\\
\downarrow& &\downarrow\\
F&\hookrightarrow& M.
\end{array}
\end{equation}
The vertical maps are $\Gamma$-covers and the horizontal maps are inclusions. 
\subsection{Mod $p$ homology of fixed point sets}
In this subsection, we will prove the Smith theory isomorphisms (\ref{smith1}) and (\ref{smith2}).
This will complete the proof of Theorem \ref{maintheorem}.
\begin{thm}
\label{coeffsmith}
Let $M$ be an $m$-dimensional smooth aspherical manifold and $\widetilde M$ its universal cover.
Suppose we have a smooth $\mathbb Z/p$-action on $\widetilde M$ that commutes with the action
of the fundamental group $\Gamma:=\pi_1M$ by covering translations. Let $F\subset M$ be the fixed point set of the projected $\mathbb Z/p$-action
on $M$. Let $V$ be a $\mathbb F_p[\Gamma]$-module. Then, the inclusion of the fixed point set induces
isomorphisms on homology and cohomology with coefficients in $V$, i.e.

\begin{eqnarray}
\label{homology}
H_*(F;V)&\cong &H_*(M;V),\\
\label{cohomology}
H^*(M;V)&\cong &H^*(F;V).
\end{eqnarray}
Moreover, if the fixed point set has dimension $f$, then we have an isomorphism of homology with closed supports
\begin{equation}
\label{closed}
H_{*+m-f}^{cl}(M;V)\cong H_*^{cl}(F;V).
\end{equation}
\end{thm}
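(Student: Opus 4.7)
The plan is to run classical Smith theory for the $\mathbb{Z}/p$-action on the contractible manifold $\widetilde M$ in a way that is equivariant for the commuting free $\Gamma$-action, and then pass to coefficients in $V$ by a chain-contraction argument. First I would fix a $\mathbb{Z}/p\times\Gamma$-equivariant smooth triangulation of $\widetilde M$ with $\widetilde F$ as a subcomplex (such a triangulation exists by Illman's equivariant triangulation theorem, since the combined action is smooth and proper). With respect to it, $C_*(\widetilde M;\mathbb{F}_p)$ and $C_*(\widetilde F;\mathbb{F}_p)$ are bounded chain complexes of free $\mathbb{F}_p[\Gamma]$-modules---one free summand per $\Gamma$-orbit of cells, since $\Gamma$ acts freely on $\widetilde M$---and the inclusion is an $\mathbb{F}_p[\Gamma]$-linear chain map commuting with the auxiliary $\mathbb{Z}/p$-action.

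Next, the classical Smith theorem applied to the $\mathbb{Z}/p$-action on the mod-$p$ acyclic space $\widetilde M$ gives that $\widetilde F$ is also mod-$p$ acyclic, so the inclusion $\widetilde F\hookrightarrow\widetilde M$ is an $\mathbb{F}_p$-quasi-isomorphism. Its mapping cone $D_*$ is then a bounded acyclic complex of free $\mathbb{F}_p[\Gamma]$-modules, and the standard fact that a bounded acyclic complex of projectives admits a chain contraction produces an $\mathbb{F}_p[\Gamma]$-contraction of $D_*$. The additive functors $-\otimes_{\mathbb{F}_p[\Gamma]}V$ and $\mathrm{Hom}_{\mathbb{F}_p[\Gamma]}(-,V)$ preserve this contraction and hence acyclicity, which gives the homology isomorphism (\ref{homology}) and the cohomology isomorphism (\ref{cohomology}).

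For the closed-support statement (\ref{closed}), the plan is to deduce it from (\ref{cohomology}) via Poincar\'e duality with local coefficients: for any smooth $n$-manifold $X$ (with local coefficients interpreted through a $\Gamma$-cover) and $\mathbb{F}_p[\Gamma]$-module $W$, one has $H_k^{cl}(X;W)\cong H^{n-k}(X;W\otimes\mathcal{O}_X)$, with $\mathcal{O}_X$ the mod-$p$ orientation character. Applying this to $M$ and $F$ and combining with (\ref{cohomology}) produces the desired shift provided that $\mathcal{O}_M|_F\cong\mathcal{O}_F$ as local systems on $F$, equivalently that the normal bundle $\nu$ of $F\subset M$ is $\mathbb{F}_p$-orientable. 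For $p=2$ this is automatic since $\mathbb{F}_2^*=\{1\}$; for odd $p$, the $\mathbb{Z}/p$-representation on each normal fibre has no trivial summand and so decomposes into $2$-dimensional real blocks carrying canonical complex structures, giving a fibrewise orientation of $\nu$. The main obstacle I anticipate is this last step---tracking the orientation local systems and verifying that Poincar\'e duality in the needed closed-support, twisted-coefficient form applies on both $M$ and $F$ with coefficients defined through $\Gamma$-covers rather than universal covers. Everything preceding it is a routine $\Gamma$-equivariant enhancement of classical Smith theory plus the usual contracting-homotopy trick.
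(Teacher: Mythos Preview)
Your proposal is correct and follows essentially the same route as the paper: equivariant triangulation (the paper triangulates $M$ for the $\mathbb Z/p$-action and lifts, which sidesteps invoking Illman for the non-compact group $\mathbb Z/p\times\Gamma$), classical Smith theory on $\widetilde M$ to get acyclicity of the relative/cone complex of free $\mathbb F_p[\Gamma]$-modules, preservation of acyclicity under $-\otimes_{\mathbb F_p[\Gamma]}V$ and $\mathrm{Hom}_{\mathbb F_p[\Gamma]}(-,V)$, and then Poincar\'e duality together with the normal-bundle orientability argument for (\ref{closed}). The only cosmetic differences are that the paper uses the relative complex $C_*(\widetilde M,\widetilde F;\mathbb F_p)$ where you use the mapping cone, and it leaves the chain-contraction reasoning implicit.
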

\begin{rem}
Geometrically, the isomorphism (\ref{closed}) is obtained by sending a cycle $c$ on $M$
to the transverse intersection $c\cap F$ on $F.$
\end{rem}
\begin{proof}
We pick a triangulation of the smooth manifold $M$ in which 
the smooth $\mathbb Z/p$-action is simplicial (this can be done by 7.1 in \cite{illman}.) 
The universal cover $\widetilde M$ is given the lifted (hence $\Gamma$-equivariant) triangulation. 
All chain complexes below are taken to be simplicial. 
Let $\widetilde F$ be the fixed point set of the 
$\mathbb Z/p$-action on the universal cover $\widetilde M.$
Lemma \ref{cover} above shows that this is the $\Gamma$-cover of $F.$ 

The key point is that 

\begin{equation}
\label{exact}
0\ra C_*(\widetilde F;\mathbb F_p)\ra C_*(\widetilde M;\mathbb F_p)\ra C_*(\widetilde M,\widetilde F;\mathbb F_p)\ra 0
\end{equation}
is an exact sequence of complexes of free $\mathbb F_p[\Gamma]$-modules, 
and the complex $C_*(\widetilde M,\widetilde F;\mathbb F_p)$ is acyclic.
\begin{itemize}
\item
The sequence is exact by definition.
\item
It is a sequence of complexes of $\mathbb F_p[\Gamma]$-modules because the $\mathbb Z/p$-action commutes with the $\Gamma$-action.
\item
For each $k$-simplex $\sigma_k$ in the base $M$, pick a lift $\widetilde\sigma_k$ in the universal cover $\widetilde M.$
Then, 
\begin{eqnarray*}
C_k(\widetilde F;\mathbb F_p)&\cong&\oplus_{\sigma_k\in F}\hspace{0.6cm}\mathbb F_p[\Gamma]\widetilde\sigma_k,\\
C_k(\widetilde M;\mathbb F_p)&\cong&\oplus_{\sigma_k}\hspace{1cm}\mathbb F_p[\Gamma]\widetilde\sigma_k,\\
C_k(\widetilde M,\widetilde F;\mathbb F_p)&\cong&\oplus_{\sigma_k\notin F}\hspace{0.6cm}\mathbb F_p[\Gamma]\widetilde\sigma_k,
\end{eqnarray*}
show that the $\mathbb F_p[\Gamma]$-modules in the sequence (\ref{exact}) are all free.
\item
The ordinary Smith theorem (VII.2.2 of \cite{bredon}) 
shows that the fixed point set $\widetilde F$ of a $\mathbb Z/p$-action on the
contractible manifold $\widetilde M$ has the $\mathbb F_p$-homology of a point, i.e. the
relative complex $C_*(\widetilde M,\widetilde F;\mathbb F_p)$ is acyclic. 
(Its homology vanishes in all dimensions.)
\end{itemize}
Recall that homology and cohomology with coefficients in $V$ are computed by\footnote{Here  $\widetilde{}$  denotes
the $\Gamma$-cover. This does not conflict with our earlier notation, because we've shown that in our situation 
the $\Gamma$-cover of the fixed point set is the fixed point set in the $\Gamma$-cover.}

\begin{eqnarray}
H_*(-;V)&=&H_*(C_*(\widetilde-;\mathbb F_p)\otimes_{\mathbb F_p[\Gamma]}V),\\
H^*(-;V)&=&H_*(\mathrm{Hom}_{\mathbb F_p[\Gamma]}(C_*(\widetilde-;\mathbb F_p),V)).
\end{eqnarray} 
Since the $\mathbb F_p[\Gamma]$-modules
appearing in the sequence (\ref{exact}) are all free, the 
functor $-\otimes_{\mathbb F_p[\Gamma]}V$ preserves exactness of the sequence (\ref{exact}) and acyclicity
of the relative complex $C_*(\widetilde M,\widetilde F;\mathbb F_p),$ so the long exact homology sequence
shows that $H_*(F;V)\ra H_*(M;V)$ is an isomorphism. The same remarks for the functor $\mathrm{Hom}_{\mathbb F_p[\Gamma]}(-,V)$
give the cohomology isomorphism (\ref{cohomology}). Finally, let $\mathcal O_M$
and $\mathcal O_F$ be the orientation $\mathbb F_p[\Gamma]$-modules on $M$ and $F$, respectively. If $p=2$,
then these modules are trivial, so they are equal. If $p$ is odd, then the normal bundle of the fixed point
set is orientable, so again the orientation modules are equal.  
Combining this with the Poincare duality isomorphisms
(Theorem V.9.2 of \cite{bredonsheaf})
\begin{eqnarray}
H^{cl}_*(M;V)&\cong &H^{m-*}(M;\mathcal O_M\otimes V),\\
H^{cl}_*(F;V)&\cong &H^{f-*}(F;\mathcal O_F\otimes V)
\end{eqnarray}
and the cohomology isomorphism (\ref{cohomology}) proves (\ref{closed}).
\end{proof}
\begin{rem}
The isomorphism (\ref{homology}) for $V=\mathbb F_p[\Gamma/\Lambda]$ is proved by Conner and Raymond
in the appendix of \cite{connerraymond}.
\end{rem}

\section{Proof of Theorem \ref{fw}}
The following proof uses methods developed by Farb and Weinberger in \cite{farbweinbergerisometries}. 
See also sections $7$ and $8$ of \cite{avramidi}. The main new ingredient is Theorem \ref{maintheorem}.
Let $\Gamma:=\pi_1M$ be the fundamental group.

\subsection*{Step 1: If $\Gamma$ commutes with a compact Lie group $K$, then $K=1$}
To show that $K$ is trivial, it suffices to show that it has no elements $\phi$ of prime order $p$.
Since $\phi$ commutes with $\Gamma$, it descends to a homotopically trivial $\mathbb Z/p$-action on the
locally symmetric space $M.$ We showed in the proof of Theorem \ref{maintheorem} that there are no
such actions. 

\subsection*{Step 2: If $\Gamma$ normalizes a compact Lie group $K$, then $K=1$}
We do this by successively eliminating characteristic compact subgroups.
Let $K_0<K$ be the identity component of the compact Lie group. We have the exact sequence
\begin{equation}
1\ra K_0^{sol}\ra K_0\ra K_0^{ss}\ra 1,
\end{equation}
where $K_0^{sol}$ is the maximal normal connected solvable subgroup and $K_0^{ss}$ is semi-simple. 
Since $K_0^{sol}$ is a compact connected solvable group, it is a torus $(S^1)^{n}.$ 
The finite subgroup $(\mathbb Z/p)^n<(S^1)^n$ is characteristic (it consists of the elements of order $1$ and $p$ in the torus),
$K_0^{sol}$ is characteristic in $K_0,$ the group $K_0$ is (topologically) characteristic in $K$ and $\Gamma$ acts on $K$ by conjugation, 
so $\Gamma$ also acts on the group $(\mathbb Z/p)^n$ by conjugation. Since this last group is finite, some finite index subgroup $\Gamma'<\Gamma$
acts trivially on it, i.e. $\Gamma'$ commutes with $(\mathbb Z/p)^n.$ By the claim above,
we have $n=0$ so that $K_0$ is semisimple. Thus, the center $Z(K_0)$ is finite so some finite index subgroup of $\Gamma$
commutes with it and by the claim we find that the center is trivial. Let $\Gamma'<\Gamma$ be the kernel of the conjugation homomorphism $\Gamma\ra\Out(K_0).$  Since $K_0$ is semisimple,
its outer automorphism group $\Out(K_0)$ is finite so the kernel is a finite index subgroup. By Proposition \ref{productlift},
the group generated by $K_0$ and $\Gamma'$ splits as a product $K_0\times\Gamma'$ and by the claim above
we conclude that $K_0$ is trivial. We've shown that $K$ is a finite group. Thus, there is a finite index subgroup $\Gamma'<\Gamma$
which commutes with $K$ and by the claim $K$ is trivial.

\subsection*{Step 3: Eliminating solvable and compact subgroups}
Let $I:=\Isom(\widetilde M,\widetilde g)$ be the isometry group and $I_0$ its identity component.
The group $I$ is a Lie group acting properly and smoothly on $\widetilde M.$ (\cite{myerssteenrod})
Let $\Gamma_0=\Gamma\cap I_0.$ 
Then, we have an exact sequence 
\begin{equation}
1\ra I_0^{sol}\ra I_0\ra I_0^{ss}\ra 1
\end{equation} 
where $I_0^{sol}$ is the maximal connected solvable normal subgroup of $I_0$ and $I_0^{ss}$ is semi-simple.
The group $\Gamma_0$ has \cite{raghunathan} a unique maximal normal solvable subgroup $\Gamma_0^{sol}.$
This subgroup is characteristic in $\Gamma_0$, so it is normal in $\Gamma.$ Since the locally symmetric
space has no local torus factors, the group $\Gamma$ has no non-trivial abelian normal subgroups 
(Theorem 10.3.10 of \cite{eberlein}), hence also no
non-trivial solvable normal subgroups, i.e. $\Gamma_0^{sol}=1.$ 
Farb and Weinberger \cite{farbweinbergerroyden} show that $\Gamma_0$ is a lattice in $I_0$. (This
only uses the assumption that $\widetilde g$ has finite $\Gamma$-covolume and
no particular properties of the mapping class group situation.)
This lets them use Prasad's Lemma $6$ in \cite{prasad} to conclude that $I_0^{sol}$ 
is compact and the center $Z(I_0^{ss})$ is finite. Since $I_0^{sol}$ is characteristic,
it is normalized by $\Gamma$. Since it is compact, Step 2 implies it is trivial. Thus, $I_0=I_0^{ss}$.
Now, $\Gamma$ acts by conjugation on the finite group $Z(I_0^{ss})$ so again Step 2
shows $Z(I_0^{ss})=1.$ Thus, $I_0$ is semisimple with trivial center.
Next, we note that it cannot have any compact factors: the product $K$ of the compact factors
is a characteristic subgroup, so it is normalized by $\Gamma$ and by Step 2 it must be trivial.
Thus, $I_0$ is semisimple with trivial center and no compact factors.

\subsection*{Step 4}
Now, look at the extension
\begin{equation}
1\ra I_0\ra\langle I_0, \Gamma \rangle\ra\Gamma/\Gamma_0\ra 1.
\end{equation}
Let $\Gamma'<\Gamma$ be the kernel of the conjugation homomorphism $\Gamma\ra\Out(I_0).$ 
It is a finite index subgroup of $\Gamma$, since $I_0$ is semisimple with trivial center. By
Proposition \ref{productlift} the extension 
$\langle I_0, \Gamma' \rangle$ splits as
$I_0\times (\Gamma'/\Gamma'_0),$ where $\Gamma'_0=\Gamma'\cap I_0.$ Consequently, 
$\Gamma'=\Gamma'_0\times (\Gamma'/\Gamma'_0)$. Since $\Gamma'$ is irreducible, we must have either
that $\Gamma'_0=1$ or that $\Gamma'_0$ is a finite index subgroup of $\Gamma'.$ Now we look at these two possibilities.

\subsection*{Case 1: $\Gamma'_0$ is a finite index subgroup of $\Gamma'$.}
Recall that $\Gamma_0$ is a lattice in the Lie group $I_0.$
Since $\Gamma'_0$ is a finite index subgroup of $\Gamma_0,$ it is also a lattice in $I_0.$
Let $K<I_0$ be a maximal compact subgroup. The quotient $I_0/K$ is a symmetric space
with no compact or Euclidean factors, so the locally symmetric space $\Gamma_0'\setminus I_0/K$ has the same
dimension as $M$. 
On the other hand, an orbit of $I_0$ acting on $\widetilde M$ has the form 
$I_0\cdot x=I_0/K_x$ for some compact subgroup $K_x<I_0.$
We have $\dim\widetilde M=\dim I_0/K\leq\dim I_0/K_x\leq\dim\widetilde M$ so the inequalities are actually
equalities. In other words, the isometry group $I_0$ is acting transitively on $\widetilde M$, i.e. 
$(\widetilde M,\widetilde g)$ is isometric to a symmetric space.  

\subsection*{Case 2: $\Gamma_0'$ is trivial}
The group $I_0$ is compact, since it contains $\Gamma_0'$ as a lattice. 
On the other hand, we've shown that $I_0$ has no compact factors, so $I_0=1$, i.e. the 
isometry group $I$ is discrete. Since $\Gamma$ is a lattice in $I$, we conclude that it is a finite index
subgroup of $I.$ Thus, there is a further finite index subgroup $\Gamma''<\Gamma$ which is normalized by $I$. 
Conjugation by $I$ gives a group homomorphism
$$
\rho:I/\Gamma''\ra\Out(\Gamma'').
$$
An element in the kernel of this homomorphism is a homotopically trivial isometry
of $(\widetilde M/\Gamma'',\widetilde g).$ By Theorem \ref{maintheorem}, there are no such homotopically
trivial isometries, so the map $\rho$ is injective. By Margulis-Mostow-Prasad rigidity, the image $\rho(I/\Gamma'')$ can be 
represented by isometries of the locally symmetric metric $h_{sym}$. Thus, the group $L$ of lifts
\begin{equation}
1\ra\Gamma''\ra L\ra\rho(I/\Gamma'')\ra 1
\end{equation}
ca be represented by isometries of the symmetric metric. This extension is determined by $\rho$ since $\Gamma''$ 
has trivial center (see subsection \ref{lifts}) so the group of lifts $L$ is isomorphic to $I$. 
We've shown that $I$ is a group of isometries of the symmetric metric. Let $\Gamma$ act
on the universal cover $\widetilde M$ by isometries via $\Gamma<I\cong L<\Isom(\widetilde M,\widetilde h_{sym}).$
By Margulis-Mostow-Prasad rigidity, the quotient $(\widetilde M/\Gamma,\widetilde h_{sym})$ is isometric to $(M,h_{sym})$. 
Consequently, 
\begin{equation}
|I/\Gamma|={\vol(\widetilde M/\Gamma,\widetilde h_{sym})\over\vol(\widetilde M/I,\widetilde h_{sym})}\leq{\vol(M,h_{sym})\over\varepsilon(h_{sym})},
\end{equation}
where $\varepsilon(h_{sym})$ is the volume of the smallest locally symmetric orbifold covered by $(M,h_{sym}).$
This completes the proof of Theorem \ref{fw}.
\begin{rem}
Kazhdan and Margulis have shown that the volume of the smallest locally
symmetric orbifold covered by a symmetric space with no compact of Euclidean factors $(\widetilde M,\widetilde h_{sym})$
is bounded below by a positive constant $\varepsilon(\widetilde h_{sym})$ 
that depends only on the symmetric space (c.f. XI.11.9 in \cite{raghunathan}).
By contrast, note that the real line covers circles of arbitrarily small volume.
\end{rem}

\bibliography{locsym}

\begin{thebibliography}{Mor01}

\bibitem[Avr11]{avramidi}
G.~Avramidi.
\newblock Smith theory, ${L^2}$ cohomology, isometries of locally symmetric
  manifolds and moduli spaces of curves.
\newblock {\em Arxiv preprint arXiv:1106.1704}, 2011.

\bibitem[Bre72]{bredon}
Glen~E. Bredon.
\newblock {\em Introduction to compact transformation groups}.
\newblock Academic Press, New York, 1972.
\newblock Pure and Applied Mathematics, Vol. 46.

\bibitem[Bre97]{bredonsheaf}
Glen~E. Bredon.
\newblock {\em Sheaf theory}, volume 170 of {\em Graduate Texts in
  Mathematics}.
\newblock Springer-Verlag, New York, second edition, 1997.

\bibitem[BS73]{borelserre}
A.~Borel and J.-P. Serre.
\newblock Corners and arithmetic groups.
\newblock {\em Comment. Math. Helv.}, 48:436--491, 1973.
\newblock Avec un appendice: Arrondissement des vari{\'e}t{\'e}s {\`a} coins,
  par A. Douady et L. H{\'e}rault.

\bibitem[CR72]{connerraymond}
P.~E. Conner and Frank Raymond.
\newblock Manifolds with few periodic homeomorphisms.
\newblock In {\em Proceedings of the {S}econd {C}onference on {C}ompact
  {T}ransformation {G}roups ({U}niv. {M}assachusetts, {A}mherst, {M}ass.,
  1971), {P}art {II}}, pages 1--75. Lecture Notes in Math., Vol. 299, Berlin,
  1972. Springer.

\bibitem[Ebe96]{eberlein}
Patrick~B. Eberlein.
\newblock {\em Geometry of nonpositively curved manifolds}.
\newblock Chicago Lectures in Mathematics. University of Chicago Press,
  Chicago, IL, 1996.

\bibitem[FW08]{farbweinbergerisometries}
Benson Farb and Shmuel Weinberger.
\newblock Isometries, rigidity and universal covers.
\newblock {\em Ann. of Math. (2)}, 168(3):915--940, 2008.

\bibitem[FW10]{farbweinbergerroyden}
Benson Farb and Shmuel Weinberger.
\newblock The intrinsic asymmetry and inhomogeneity of {T}eichm\"uller space.
\newblock {\em Duke Math. J.}, 155(1):91--103, 2010.

\bibitem[Ill83]{illman}
S{\"o}ren Illman.
\newblock The equivariant triangulation theorem for actions of compact {L}ie
  groups.
\newblock {\em Math. Ann.}, 262(4):487--501, 1983.

\bibitem[Mel09]{melnicklorentz}
Karin Melnick.
\newblock Compact {L}orentz manifolds with local symmetry.
\newblock {\em J. Differential Geom.}, 81(2):355--390, 2009.

\bibitem[ML95]{maclane}
Saunders Mac~Lane.
\newblock {\em Homology}.
\newblock Classics in Mathematics. Springer-Verlag, Berlin, 1995.
\newblock Reprint of the 1975 edition.

\bibitem[Mor01]{morris}
D.W. Morris.
\newblock Introduction to arithmetic groups.
\newblock {\em Arxiv preprint math/0106063}, 2001.

\bibitem[MS39]{myerssteenrod}
S.~B. Myers and N.~E. Steenrod.
\newblock The group of isometries of a {R}iemannian manifold.
\newblock {\em Ann. of Math. (2)}, 40(2):400--416, 1939.

\bibitem[Pha11]{tamsymmetry}
T.Tam~Nguyen Phan.
\newblock {(A)}symmetry of piecewise locally symmetric manifolds.
\newblock {\em In preparation}, 2011.

\bibitem[Pra76]{prasad}
Gopal Prasad.
\newblock Discrete subgroups isomorphic to lattices in {L}ie groups.
\newblock {\em Amer. J. Math.}, 98(4):853--863, 1976.

\bibitem[PS09]{pettetsouto}
A.~Pettet and J.~Souto.
\newblock {Periodic maximal flats are not peripheral}.
\newblock {\em Arxiv preprint arXiv:0909.2899}, 2009.

\bibitem[Rag07]{raghunathan}
M.~S. Raghunathan.
\newblock Discrete subgroups of {L}ie groups.
\newblock {\em Math. Student}, (Special Centenary Volume):59--70 (2008), 2007.

\end{thebibliography}
\bibliographystyle{alpha}

\end{document}